\newcommand{\Aff}{\mathrm{Aff}}
\newcommand{\Inn}{\mathrm{Inn}}
\newcommand{\trid}{\triangleright}
\newcommand{\Aut}{\operatorname{Aut}}
\newcommand{\Alt}{\mathbb{A}}
\newcommand{\Sym}{\mathbb{S}}
\newcommand{\gap}{\textsf{GAP}}
\newcommand{\rig}{\textsf{Rig}}
\newcommand{\N}{\mathbb{N}}
\newcommand{\Z}{\mathbb{Z}}
\newcommand{\F}{\mathbb{F}}
\newcommand{\D}{\mathbb{D}}
\newcommand{\cT}{\mathcal{T}}
\theoremstyle{plain}
\newtheorem{thm}{Theorem}[section]
\newtheorem{lem}[thm]{Lemma}
\newtheorem{pro}[thm]{Proposition}
\newtheorem{cor}[thm]{Corollary}
\newtheorem{exa}[thm]{Example}
\newtheorem{nota}[thm]{Notation}
\newtheorem{conjeture}[thm]{Conjeture}
\theoremstyle{remark}
\newtheorem*{acknowledgement*}{Acknowledgement}
\begin{document}
\title{On the classification of quandles of low order}
\author{L. Vendramin}
\thanks{This work was partially supported by CONICET}

\address{Departamento de Matem\'atica -- FCEN,
Universidad de Buenos Aires, Pab. I -- Ciudad Universitaria (1428)
Buenos Aires -- Argentina}

\email{lvendramin@dm.uba.ar}

\subjclass[2010]{57M27}

\date{}
\maketitle

\begin{abstract}
	Using the classification of transitive groups we classify
	indecomposable quandles of size $<36$. This classification is available
	in \rig, a \gap~package for computations related to racks and quandles.
	As an application, the list of all indecomposable quandles of size
	$<36$ not of type D is computed. 
\end{abstract}

\section{Introduction}

Racks appeared for the first time in \cite{MR1194995} and quandles appeared in
\cite{MR638121} and \cite{MR672410}. Racks and quandles are used in modern knot
theory because they provide good knot invariants, \cite{MR638121}. They are
also useful for the classification problem of pointed Hopf algebras because
they provide a powerful tool to understand Yetter-Drinfeld modules over groups,
see \cite{MR1994219}.  Of course, the classification of finite racks (or
quandles) is a very difficult problem.  Several papers about classifications of
different subcategories of racks have appear, see for example \cite{MR638121},
\cite{MR682881}, \cite{MR1848966}, \cite{MR2175299}, \cite{MR1994219},
\cite{MR2093034}, \cite{Clauwens}.

\medskip
In this paper we use the classification of transitive groups and 
the program described in  \cite{MR2414453} to classify indecomposable quandles. With this
method, we complete the classification of all non-isomorphic indecomposable
quandles of size $<36$.  This classification is available in \rig, a
\gap~\cite{gap} package designed for computations related to racks and
quandles. \rig~is a free software and it is available at
\verb+http://code.google.com/p/rig/+.

\section{Definitions and examples}

We recall basic notions and facts about racks. For additional information we
refer for example to \cite{MR1994219}.  A \textit{rack} is a pair $(X,\trid)$,
where $X$ is a non-empty set and $\trid:X\times X\to X$ is a map (considered as
a binary operation on $X$) such that
\begin{enumerate}
  \item the map $\varphi_i:X\to X$, where $x\mapsto i\trid x$, is bijective for all $i\in X$, and
  \item $i\trid(j\trid k)=(i\trid j)\trid(i\trid k)$ for all $i,j,k\in X$.
\end{enumerate}

A rack $(X,\trid )$, or shortly $X$, is a \textit{quandle} if $i\trid i=i$ for all $i\in X$.
A \textit{subrack} of a rack $X$ is a non-empty subset $Y\subseteq X$ such that
$(Y,\trid)$ is also a rack. 

\begin{exa}
A group $G$ is a quandle with $x\trid y=xyx^{-1}$ for all $x,y\in G$.  
If a subset $X\subseteq G$ is stable under conjugation by $G$, then it is a
\emph{subquandle} of $G$. 
\end{exa}

To construct racks associated to (union of) conjugacy classes of groups use the
\rig~function \verb+Rack+. 
For example, to construct the quandle of three
elements associated to the conjugacy class of transpositions in $\Sym_3$:
	\begin{verbatim}
	     gap> r := Rack(SymmetricGroup(3), (1,2));;
	     gap> Size(r);
	     3
	\end{verbatim}

\begin{exa}
	Let $G$ be a group and $s\in\Aut(G)$. Define $x\trid y=xs(x^{-1}y)$ for
	$x,y\in G$.  Then $(G,\trid)$ is a quandle.  Further, let $H\subseteq
	G$ be a subgroup such that $s(h)=h$ for all $h\in H$.  Then $G/H$ 
	is a quandle with $xH\trid yH=xs(x^{-1}y)H$. It is called the
	\emph{homogeneous quandle} $(G,H,s)$.
\end{exa}

\begin{exa}
	Let $n\geq2$. The \emph{dihedral quandle} of order $n$ is
	$\Z_n=\{0,1,\dots,n-1\}$ with $i\trid j=2i-j\pmod{n}$. 
\end{exa}

The package provides several functions to construct racks and quandles. See the
documentation for more information.

\medskip
Let $X$ be a finite rack. Assume that $X=\{x_1,x_2,\dots,x_n\}$. With the
identification $x_i\equiv i$ the rack $X$ can be presented as a square matrix
$M\in\N^{n\times n}$ such that $M_{ij}=(i\trid j)$. This matrix is called
\emph{the table} of the rack. See \cite{MR2175299}.

\begin{exa}
The matrix (or table) of the rack $\D_4$ is
\[
\begin{pmatrix}
\varphi_1\\
\varphi_2\\
\varphi_3\\
\varphi_4
\end{pmatrix}
=\begin{pmatrix}
1&4&3&2\\
3&2&1&4\\
1&4&3&2\\
3&2&1&4
\end{pmatrix}.
\]
The files of the matrix are the permutations of the quandle: $\varphi_1=\varphi_3=(2\,4)$ and $\varphi_2=\varphi_4=(1\,3)$.
\begin{verbatim}
     gap> D4 := DihedralQuandle(4);;  
     gap> Permutations(D4);
     [ (2,4), (1,3), (2,4), (1,3) ]
     gap> Table(D4);
     [ [  1,  4,  3,  2 ],
       [  3,  2,  1,  4 ],
       [  1,  4,  3,  2 ],
       [  3,  2,  1,  4 ] ]
\end{verbatim}
\end{exa}

Let $(X,\trid)$ and $(Y,\trid)$ be racks.  A map $f:X\to Y$ is a
\textit{morphism} of racks if $f(i\trid j)=f(i)\trid f(j)$ for all $i,j\in X$.

\begin{nota}
We write $g^G$ for the conjugacy class of $g$ in $G$. 
\end{nota}

\begin{exa}
Let  $\cT_1=(1\,2\,3)^{\Alt_4}$ and
$\cT_2=(1\,3\,2)^{\Alt_4}$. Then the quandles $\cT_1$ and $\cT_2$ are isomorphic.
\begin{verbatim}
     gap> T1 := Rack(AlternatingGroup(4), (1,2,3));;
     gap> T2 := Rack(AlternatingGroup(4), (1,3,2));;
     gap> IsomorphismRacks(T1, T2);
     (3,4)
\end{verbatim}
Hence $\cT_1\simeq\cT_2$ and the isomorphism is given by the permutation
$\sigma=(3\,4)$.  More precisely, assume that $\cT_1=\{x_1,x_2,x_3,x_4\}$ and
$\cT_2=\{y_1,y_2,y_3,y_4\}$.  Then the map $f:\cT_1\to \cT_2$,
$f(x_i)=y_{\sigma(i)}$, is an isomorphism of racks. 
\end{exa}
 

\begin{exa}
Let $A$ be an abelian group, and let $T\in\Aut(A)$. We have a quandle structure on
$A$ given by
\[
a\trid b=(1-T)a+Tb
\]
for $a,b\in A$. The quandle $(A,\trid)$ is called \emph{affine (or Alexander)
quandle} and it will be denoted by $\mathrm{Aff}(A,T)$.  
In particular, let $p$ be a prime number, $q$ a power of $p$ 
and $\alpha\in\F_q^\times=\F_q\setminus\{0\}$.
We write $\mathrm{Aff}(\mathbb{F}_q, \alpha)$, or simply
$\mathrm{Aff}(q,\alpha)$, for the affine quandle $\mathrm{Aff}(A,g)$, where
$A=\mathbb{F}_q$ and $g$ is the automorphism given by $x\mapsto\alpha x$ for
all $x\in\F_q$.
\end{exa}

\begin{exa}
\label{exa:T}
The \emph{tetrahedron quandle} is the quandle $\cT=(1\,2\,3)^{\Alt_4}$. It is
easy to see that this quandle is isomorphic to an affine quandle over $\F_4$.
\end{exa}


The \textit{inner group} of a rack $X$ is the group generated by the
permutations $\varphi_i$ of $X$, where $i\in X$. We write $\Inn(X)$ for the
inner group of $X$.  A rack is said to be \emph{faithful} if the map
\begin{align*}
\varphi :X\to\Inn (X),\qquad
i\mapsto\varphi_i,
\label{eq:varphi}
\end{align*}
is injective. 
We say that a rack $X$ is \textit{indecomposable} (or connected) if the inner
group $\Inn(X)$ acts transitively on $X$. Also, $X$ is \textit{decomposable} if
it is not indecomposable. Any finite rack $X$ is the disjoint union of
indecomposable subracks \cite[Prop.\,1.17]{MR1994219} called the
\textit{components of} $X$.

\begin{exa}
\label{exa:D4}
The dihedral quandle $\D_4$ is decomposable: $\D_4=\{1,3\}\sqcup\{2,4\}$.
\begin{verbatim}
     gap> D4 := DihedralQuandle(4);;
     gap> IsIndecomposable(D4);
     false
     gap> Components(D4);
     [ [ 1, 3 ], [ 2, 4 ] ]
\end{verbatim}
\end{exa}

For any rack $X$, the \textit{enveloping group} of $X$ is 
\[
G_X=F(X)/\langle iji^{-1}=i\trid j,\; i,j\in X\rangle,
\]
where $F(X)$ denotes the free group generated by $X$. This group is also called the
\textit{associated group} of $X$, see \cite{MR1194995}.
Let
\[
\overline{G_X}=G_X/\langle x^{\mathrm{ord}(\varphi_x)}\mid x\in X\rangle.
\]
If $X$ is finite then the group $\overline{G_X}$ is finite and it is called the
\emph{finite enveloping group of $X$}, see \cite{marburg}. 

\begin{exa}
	Let $X=\cT$ be the tetrahedron rack. Then $\Inn(X)\simeq\Alt_4$ and
	$\overline{G_X}\simeq\mathbf{SL}(2,3)$.
	\begin{verbatim}
	     gap> T := Rack(AlternatingGroup(4), (1,2,3));; 
	     gap> inn := InnerGroup(T);;
	     gap> StructureDescription(inn);
	     A4
	     gap> env := FiniteEnvelopingGroup(T);;
	     gap> StructureDescription(env);
	     SL(2,3)
	\end{verbatim}
\end{exa}

Table \ref{tab:finite_enveloping_groups} contains the inner group and
the finite enveloping groups associated to some particular racks.
These racks appear in the classification of finite-dimensional Nichols
algebras, see for example \cite[Table 6]{CLA}.

\begin{table}[h]
\caption{Some finite enveloping groups}
\begin{tabular}{|c|c|c|}
\hline 
Quandle & $\Inn(Q)$ & $\overline{G_X}$ \tabularnewline
\hline
$\D_3$ & $\Sym_3$ & $\Sym_3$ \tabularnewline
\hline
$\cT$ & $\Alt_4$ & $\mathbf{SL}(2,3)$ \tabularnewline
\hline
$\Aff(5,2)$, $\Aff(5,3)$ & $\Z_5\rtimes\Z_4$ & $\Z_5\rtimes\Z_4$ \tabularnewline
\hline
$(1\,2)^{\Sym_4}$ & $\Sym_4$ & $\Sym_4$\tabularnewline
\hline
$\Aff(7,3)$, $\Aff(7,5)$ & $(\Z_7\rtimes\Z_3)\rtimes\Z_2$ & $(\Z_7\rtimes\Z_3)\rtimes\Z_2$ \tabularnewline
\hline
$(1\,2\,3\,4)^{\Sym_4}$ & $\Sym_4$ & $\mathbf{SL}(2,3)\rtimes\Z_4$\tabularnewline
\hline
$(1\,2)^{\Sym_5}$ & $\Sym_5$ & $\Sym_5$\tabularnewline
\hline
\end{tabular}
\label{tab:finite_enveloping_groups} 
\end{table}

\section{The classification of indecomposable quandles of low order}

The main tool for the classification of indecomposable quandles is the
following theorem of \cite{MR2414453}. Our proof is heavily based on
\cite[Theorem 7.1]{MR638121}. For completeness we give a proof in the context
of this paper.



\begin{thm}
\label{thm:main}
Let $X$ be an indecomposable quandle of $n$ elements. Let $x_0\in X$, $z=\varphi_{x_0}$, 
$G=\Inn(X)$ and $H=\mathrm{Stab}_G(x_0)=\{g\in G\mid g\cdot x_0=x_0\}$. Then
\begin{enumerate}
\item $G$ is a transitive group of degree $n$,
\item $z$ is a central element of $H$,
\item $X$ is isomorphic to the homogeneous quandle $(G,H,I_z)$, where
	$I_z:G\to G$ is the conjugation $x\mapsto zxz^{-1}$. 
\end{enumerate} 
\end{thm}

\begin{proof}
The claim (1) follows by definition.  
The claim (2) follows from \cite[Theorem 4.3]{MR2414453}. 
We now prove (3). We consider the quandle structure over $G$ given by
$x\trid y=xI_z(x^{-1}y)$ for all $x,y\in G$,  
and let $e:G\to X$, $x\mapsto x\cdot x_0$, be the evaluation map. Since $G$
acts transitively on $X$, the map $e$ is surjective. We claim that $e$ is a
rack morphism.  Indeed, 
    \begin{align*}
	    e(x\trid y)=e(xs(x^{-1}y))&=e(xzx^{-1}yz^{-1})=zxx^{-1}yz^{-1}\cdot x_0\\
	    &=xzx^{-1}y\cdot x_0=x\cdot (x_0\trid (x^{-1}y\cdot x_0))=e(x)\trid e(y)
    \end{align*}
    for all $x,y\in G$. Further, $e(x)=e(y)$ if and only if $xH=yH$.
    Then $e$ induces the isomorphism $G/H\to X$, $xH\mapsto e(x)$. Hence the claim follows. 
\end{proof}

\begin{algorithm}[H]
\caption{Indecomposable quandles of size $n$}
\label{alg:quandles}\BlankLine 
\KwResult{The list $L$ of all non-isomorphic indecomposable quandles}
$L\longleftarrow\emptyset$\;
\For{all transitive groups $G$ of degree $n$}{
  Compute $H=\mathrm{Stab}_G(x_0)$\;
  Compute $Z(H)$, the center of $H$\;
  \For{$z\in Z(H)\setminus\{1\}$}{
    Compute the homogeneous quandle $Q=(G,H,I_z)$\;
    \If{$Q$ is indecomposable and $Q\not\simeq X$ for all $X\in L$}{
         Add the quandle $Q$ to $L$\;
    }
}
}
\end{algorithm}

Recall that all indecomposable quandles of prime order $p$ are affine, see
\cite{MR1848966}. Let $n\in\N$, $n<36$, and $n$ not being a prime number.  
Using Theorem \ref{thm:main} and  
Algorithm \ref{alg:quandles}, the list of all non-isomorphic 
indecomposable quandles can be constructed.  
The only requirement is the classification of transitive groups.
The complete list of transitive groups up to degree $<32$ is included in \gap.
Hulpke classified several of these transitive groups, see \cite{MR2168238}.
Further, Hulpke classified transitive groups of degree $33$, $34$ and $35$.
Transitive groups of degree $32$ were classified in \cite{MR2455702}. 


\medskip
For $n\in\N$ let $q(n)$ be the number of non-isomorphic indecomposable quandles
or size $n$. In Example \ref{exa:q(20)} above, $q(20)$ is computed. Further, Table
\ref{tab:all} shows the value of $q(n)$ for $n\in\{1,2,\dots,35\}$.

\begin{exa}
\label{exa:q(20)}
There are $10$ isomorphism classes of indecomposable quandles of order $20$. 
\begin{verbatim}
     gap> NrSmallQuandles(20);
     10
\end{verbatim}
\end{exa}

\rig~contains a huge database with the set of representatives of isomorphism
classes of indecomposable quandles of size $<36$. Let $n\in\{1,2,\dots,35\}$
such that $q(n)\ne0$, and let 
\[
Q_{n,1},Q_{n,2},\dots,Q_{n,q(n)}
\]
be the set of representatives of isomorphism classes of indecomposable quandles
of size $n$. In the package, a representative $Q_{n,i}$, $1\leq i\leq q(n)$,
can be obtained with the function \verb+SmallQuandle+.

\begin{exa}
There exists only one (up to isomorphism) indecomposable quandle of order $10$.
Further, this quandle is isomorphic to the conjugacy class of transpositions in
$\Sym_5$.
\begin{verbatim}
     gap> NrSmallQuandles(10);
     1
     gap> Q := SmallQuandle(10, 1);;
     gap> R := Rack(SymmetricGroup(5), (1,2));;
     gap> IsomorphismRacks(Q, R);
     (3,5,6,10,8,4,9,7)
\end{verbatim}
\end{exa}

Recall that a \textit{crossed set} is a quandle $(X,\trid)$ which further
satisfies $j\trid i = i$ whenever $i\trid j = j$ for all $i,j\in X$.


\begin{exa}
	It is easy to see that the only indecomposable quandles of size $<36$
	which are not crossed sets are $Q_{30,4}$ and $Q_{30,5}$.
\end{exa}

\begin{table}[h]
\caption{The number of non-isomorphic indecomposable quandles}
\begin{tabular}{c|cccccccccccc}
$n$ & 1 & 2 & 3 & 4 & 5 & 6 & 7 & 8 & 9 & 10 & 11 & 12\tabularnewline
$q(n)$ & 1 & 0 & 1 & 1 & 3 & 2 & 5 & 3 & 8 & 1 & 9 & 10\tabularnewline
\hline 
$n$ & 13 & 14 & 15 & 16 & 17 & 18 & 19 & 20 & 21 & 22 & 23 & 24\tabularnewline
$q(n)$ & 11 & 0 & 7 & 9 & 15 & 12 & 17 & 10 & 9 & 0 & 21 & 42\tabularnewline
\hline 
$n$ & 25 & 26 & 27 & 28 & 29 & 30 & 31 & 32 & 33 & 34 & 35 & \tabularnewline
$q(n)$ & 34 & 0 & 65 & 13 & 27 & 24 & 29 & 17 & 11 & 0 & 15 & \tabularnewline
\end{tabular}
\label{tab:all}
\end{table}

\begin{conjeture}
Let $p$ be an odd prime number and let $Q$ be an indecomposable quandle of $2p$
elements. Then $p\in\{3,5\}$.
\end{conjeture}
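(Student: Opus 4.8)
The plan is to invoke Theorem~\ref{thm:main} to translate the problem into one about transitive groups, and then to run a case analysis on the primitivity of the inner group. Suppose $Q$ is an indecomposable quandle with $|Q|=2p$; fix $x_0\in Q$ and set $G=\Inn(Q)$, $H=\mathrm{Stab}_G(x_0)$, $z=\varphi_{x_0}$. By Theorem~\ref{thm:main}, $G$ is transitive of degree $2p$, $[G:H]=2p$, and $z\in Z(H)$, with $z\neq 1$ (if $z=1$ then, by transitivity, every $\varphi_i$ is trivial, so $\Inn(Q)=1$ cannot act transitively on $|Q|>1$ points). Since $\varphi_{g\cdot x_0}=g\varphi_{x_0}g^{-1}$, every $\varphi_i$ is conjugate to $z$, so $G=\langle z^G\rangle$ and in particular $G^{ab}$ is cyclic. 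The goal is to show these constraints force $p\in\{3,5\}$. There are two regimes: $G$ primitive on $Q$, and $G$ imprimitive.

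First I would dispose of the primitive case. Since $2p$ is neither a prime power nor a proper power, the O'Nan--Scott theorem leaves only the almost simple type, so the socle of $G$ is a nonabelian simple group with a subgroup of index $2p$. Using the CFSG-based classification of primitive groups of degree $2p$ one lists the possibilities. The decisive extra input is $Z(H)\neq 1$: this eliminates the $2$-transitive families, whose point stabilizers are centreless (for instance $\Sym_{2p-1}\le\Sym_{2p}$, or the Borel subgroup of $\mathbf{PSL}(2,q)$ in its degree $q+1=2p$ action, which is a Frobenius group and hence has trivial centre). What survives is the action of $\Sym_m$ on $2$-subsets, of degree $\binom{m}{2}=2p$; here the stabilizer is $\Sym_2\times\Sym_{m-2}$, whose centre is nontrivial, and $\binom{m}{2}=2p$ with $p$ prime forces $m\in\{4,5\}$, i.e.\ $p\in\{3,5\}$, recovering $(1\,2)^{\Sym_4}$ and $(1\,2)^{\Sym_5}$.

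Next I would treat the imprimitive case. A nontrivial block has size $2$ or $p$. Blocks of size $p$ (two blocks) are impossible: each $\varphi_i$ fixes $i$, hence fixes the block containing $i$, and with only two blocks it fixes both; thus $G=\langle\varphi_i\rangle$ preserves each block, contradicting transitivity on the two blocks. So any block system consists of $p$ blocks of size $2$, and the induced congruence yields a connected quotient quandle $\bar Q$ of order $p$, which is affine: $\bar Q\cong\Aff(p,\alpha)$ with $\Inn(\bar Q)=\Z_p\rtimes\langle\alpha\rangle$ a Frobenius group of order $pd$, $d=\mathrm{ord}(\alpha)$. Writing $N=\ker(G\to\Inn(\bar Q))$, the group $N$ acts on each $2$-element block, so $N$ is an elementary abelian $2$-group and, as $\Z_p$ permutes the blocks regularly, $N$ is a nonzero $\F_2[\Z_p\rtimes\langle\alpha\rangle]$-submodule of the permutation module $\F_2^p$. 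One checks $N\neq 0$: if $N=1$ then $z$ would have to generate the full point stabilizer $\cong\Z_d$ of $\bar Q$ while simultaneously lying in an index-$2$ subgroup of it, which is absurd. The element $z$ projects to a generator $\bar z$ of this $\Z_d$, so $H=\langle N_0,z\rangle$, where $N_0\le N$ is the index-$2$ subgroup fixing $x_0$; moreover $z\in Z(H)$ holds if and only if $\bar z$ centralises $N_0$, i.e.\ $N_0\subseteq\mathrm{Fix}(\alpha)$.

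The main obstacle is to close this last case uniformly in $p$. The surviving data is a nonzero submodule $N\subseteq\F_2^p$ together with a lift $z$ of $\bar z$ subject to three competing demands: $N_0\subseteq\mathrm{Fix}(\alpha)$ (centrality of $z$); $\langle z^G\rangle=G$, so the conjugates of $z$ must generate all of $N$, not merely surject onto $\bar G$; and $z\in H$ with $\mathrm{ord}(z)\mid |N|\,d/2$. These pull in opposite directions: centrality forces $N_0$ into the $\alpha$-fixed subspace, of dimension $(p-1)/d$, whereas the generation condition forces $N$ to be spread across the $\Z_p$-orbit. I expect that combining the $\F_2[\Z_p]$-module structure, governed by the factorisation of $x^p-1$ over $\F_2$ and hence by $\mathrm{ord}_p(2)$, with the extension class measuring whether $z$ can be chosen of order $d$, rules out every $p\ge 7$. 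Already the model computation at $d=2$, $N=\langle(1,\dots,1)\rangle$ exhibits the failure: the only central extensions of the dihedral group $\Z_p\rtimes\Z_2$ by $\Z_2$ are the split one and the dicyclic one, and in neither can a lift $z$ of a reflection be simultaneously of order $2$, central in $H$, and satisfy $\langle z^G\rangle=G$. Turning this sample computation into a uniform argument over all admissible $N$ and all $d\mid p-1$ is the crux, and is presumably why the statement is posed as a conjecture rather than a theorem.
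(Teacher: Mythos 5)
The statement you are addressing is posed in the paper as a \emph{Conjecture}, with no proof offered; the only evidence the paper supplies is computational, namely the values $q(6)=2$, $q(10)=1$ and $q(14)=q(22)=q(26)=q(34)=0$ in Table \ref{tab:all}, which verify the claim for $p\leq 17$. Your proposal therefore cannot be matched against a proof in the paper, and, as you yourself concede in the final sentence, it is not a proof either: the decisive case (imprimitive, with $p$ blocks of size $2$) is reduced to a concrete question about a nonzero $\F_2[\Z_p\rtimes\Z_d]$-submodule $N\subseteq\F_2^p$, a lift $z$ of a generator of $\Z_d$ with $N_0\subseteq\mathrm{Fix}(\alpha)$, and the generation condition $\langle z^G\rangle=G$, but that question is settled only in the single sample case $d=2$, $N=\langle(1,\dots,1)\rangle$. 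Everything for $d>2$ or for larger $N$ --- exactly where a counterexample would have to live --- is left open, so the conjecture is not established by your argument.

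Two further points in the parts you do argue need attention. First, passing from a block system of $\Inn(Q)$ to a quotient quandle $\bar Q$ of order $p$ is not automatic: a system of imprimitivity for the inner group need not be a congruence of the quandle, since one needs $i\sim i'$ to imply $i\trid j\sim i'\trid j$, which does not follow merely from the blocks being permuted by $G$. You should instead work with the kernel $K$ of the action on blocks and the transitive group $G/K$ of degree $p$, or justify why the blocks are the orbits of a normal subgroup. Second, in the primitive branch the action of $\Sym_4$ on $2$-subsets (degree $6$) is in fact imprimitive --- complementary pairs of $2$-subsets form blocks --- so $(1\,2)^{\Sym_4}$ belongs to your imprimitive case; and the reduction of simply primitive groups of degree $2p$ to the $2$-subset actions of $\Sym_5$ and $\Alt_5$ is itself a nontrivial CFSG-based classification that requires a citation. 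None of this invalidates the overall strategy, which is a reasonable way to attack the conjecture, but the crucial step is missing and the statement remains open.
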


\section{Rack homology}

Let $X$ be a rack.  For $n\geq0$ let $C_n(X,\Z)=\Z X^n$. Consider
$C_*(X,\Z)$ as a complex with boundary $\partial_0=\partial_1=0$ and
$\partial_{n+1}:C_{n+1}(X,\Z)\to C_n(X,\Z)$ defined by 
\begin{align*}
&&\partial_{n+1}(x_1,x_2,\dots,x_{n+1})=\sum_{i=1}^n(-1)^{i+1}&[(x_1,\dots,x_{i-1},x_{i+1},\dots,x_{n+1})\\
&&&-(x_1,\dots,x_{i-1},x_i\trid x_{i+1},\dots,x_i\trid x_{n+1})]
\end{align*}
for $n\geq1$. It is straightforward to prove that $\partial^2=0$. The
\emph{homology} $H_*(X,\Z)$ of $X$ is the homology of the complex $C_*(X,\Z)$.
See for example  \cite{MR1990571}, \cite{MR2063665}, \cite{MR2255194} for
applications to the theory of knots and \cite{MR1994219} for applications to
the theory of Hopf algebras.

\begin{exa}
\label{exa:D3_H2}
Let $X=\D_5$. Then $H_2(X,\Z)\simeq\Z$. 
\begin{verbatim}
     gap> RackHomology(DihedralQuandle(5), 2);
     [ 1, [  ] ]
\end{verbatim}
\end{exa}

\begin{exa}
\label{exa:type23_S5_H2}
Let $X=(1\,2)(3\,4\,5)^{\Sym_5}$. 
Then $H_2(X,\Z)\simeq\Z\times\Z_6$. 
\begin{verbatim}
     gap> r := Rack(SymmetricGroup(5), (1,2)(3,4,5));;
     gap> RackHomology(r, 2);
     [ 1, [ 6 ] ]
\end{verbatim}
\end{exa}

\begin{exa}
\label{exa:T_H2}
Recall that $\cT$ is the tetrahedron quandle defined in Example \ref{exa:T}.
Then 
$H_2(\cT,\Z)\simeq\Z\times\Z_2$ and 
$H_3(\cT,\Z)\simeq\Z\times\Z_2\times\Z_2\times\Z_4$. 
Further, the torsion subgroup of $H_2(\cT,\Z)$ 
is generated by 
\[
\chi=\chi_{(1,2)}+\chi_{(1,3)}+\chi_{(2,1)}+\chi_{(2,3)}+\chi_{(3,1)}+\chi_{(3,2)},
\]
where 
\[
\chi_{(i,j)}(a,b)=
\begin{cases}
	1 \text{ if $(i,j)=(a,b)$},\\
	0 \text{ otherwise}.
\end{cases}
\]
Indeed,
\begin{verbatim}
     gap> T := Rack(AlternatingGroup(3), (1,2,3));; 
     gap> RackHomology(T, 2);
     [ 1, [ 2 ] ]
     gap> RackHomology(T, 3);
     [ 1, [ 2, 2, 4 ] ]
     gap> TorsionGenerators(T, 2);
     [ [ 0, 1, 1, 0, 1, 0, 1, 0, 1, 1, 0, 0, 0, 0, 0, 0 ] ]
\end{verbatim}
\end{exa}


Table \ref{tab:H2} contains the second rack homology group of all the
indecomposable quandles of size $\leq21$. Quandles with a prime number of
elements were not included in Table \ref{tab:H2} because of the following lemma
of \cite{MR2093034}.

\begin{lem}
Let $p$ be a prime number. Let $X$ be an indecomposable quandle of $p$
elements. Then $H_2(X,\Z)\simeq\Z$. 
\end{lem}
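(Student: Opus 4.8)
The plan is to use the structure theorem (Theorem~\ref{thm:main}) to realize $X$ as a homogeneous quandle, then compute the relevant piece of rack homology directly. Since $X$ is an indecomposable quandle of prime order $p$, by the classification recalled in the text every such quandle is affine, so $X\simeq\Aff(p,\alpha)$ for some $\alpha\in\F_p^\times$ with $\alpha\neq1$ (the case $\alpha=1$ gives a trivial, decomposable quandle). Thus it suffices to prove the statement for the affine quandle $\Aff(\Z_p,T)$, where $T$ is multiplication by $\alpha$, and I would reduce immediately to this concrete model.

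First I would set up the low-degree part of the rack chain complex explicitly. We have $C_1(X,\Z)=\Z X=\Z^p$, $C_2(X,\Z)=\Z X^2=\Z^{p^2}$, and $C_3(X,\Z)=\Z X^3$, with $\partial_1=0$, so that $H_2(X,\Z)=\ker\partial_2/\img\,\partial_3$. Writing $\partial_2(x,y)=\varphi_0\bigl((x)-(x\trid y)\bigr)$ componentwise — where I am abusing notation, the true formula gives $\partial_2(x,y)=(x)-(x\trid y)$ — I would first identify $\ker\partial_2$. Using $x\trid y=(1-\alpha)x+\alpha y$, the generator $(x,y)$ lies in the kernel contributions according to whether $x\trid y=x$, i.e.\ whether $\alpha y=\alpha x$, which for $\alpha\neq0$ happens exactly when $x=y$. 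The rank of $\partial_2$ and the structure of its image can then be read off from the orbit structure of the map $(x,y)\mapsto(x,x\trid y)$ on $X^2$.

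The heart of the argument is the computation of $\partial_3$ and the resulting quotient. The key step is to show that $H_2(X,\Z)$ is free of rank one and has no torsion. For the affine model this is a finite linear-algebra computation over $\Z$: one writes the boundary matrices, uses the group structure of $\Z_p$ to diagonalize the action of translations (which commute with the differentials), and shows that the only surviving homology class in degree two is the ``fundamental'' class coming from the diagonal-type cycle. Because $X$ is indecomposable of prime order, the relevant permutation $\varphi_{x_0}$ has order exactly $p$ acting as a $p$-cycle, and this primality is precisely what forces the torsion to vanish: any potential torsion coefficient would have to divide a power of $p$, and the freeness of the translation action on $X\setminus\{x_0\}$ eliminates it.

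The main obstacle will be controlling the image of $\partial_3$ precisely enough to conclude that the quotient is exactly $\Z$ with no torsion, rather than merely computing its rank. In the general homogeneous-quandle language this requires a careful bookkeeping of which $2$-cycles are boundaries; the cleanest route is to exploit the $\Z_p$-symmetry to decompose each chain group into isotypic components under the regular representation and to check the homology componentwise, where the triviality of $H_2$ off the invariant part follows from the invertibility of $(1-\alpha)$ and $\alpha$ modulo $p$. I would therefore expect the decisive effort to go into the Fourier-type decomposition over $\F_p$ and the verification that all non-trivial characters contribute nothing to $H_2$, leaving only the rank-one free summand claimed.
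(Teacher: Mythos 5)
Your plan is genuinely different from the paper's proof, which is a two-line citation: the free rank is $1$ by the Betti-number theorem of Litherland--Nelson \cite[Theorem 2.2]{MR1952425} (valid for any finite indecomposable rack), and the absence of torsion comes from Gra{\~n}a \cite[Lemma 5.1]{MR2093034}. Your reduction to $X\simeq\Aff(p,\alpha)$, $\alpha\ne 0,1$, is fine, but the direct computation you sketch has a gap exactly where the content of the lemma lies, namely in killing the torsion. The proposed ``Fourier-type decomposition into isotypic components under the regular representation'' of the translation group $\Z_p$ does not exist over $\Z$ or over $\F_p$: $\Z[\Z_p]$ is not semisimple, and it only splits off the trivial character after inverting $p$ (or over $\Z[1/p][\zeta_p]$). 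So diagonalizing the translations can at best compute $H_2(X,\Z[1/p])$, i.e.\ the free rank and the prime-to-$p$ torsion; it is structurally blind to $p$-torsion, which is the delicate part (over $\F_p$ the translation action is unipotent, not diagonalizable). Your fallback claim that ``any potential torsion coefficient would have to divide a power of $p$'' is unjustified---a priori the torsion of rack homology is only controlled by powers of $|\Inn(X)|=p\cdot\mathrm{ord}(\alpha)$, not of $p$---and even if it were true, it would leave exactly the $p$-part that the Fourier argument cannot reach.

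There are also concrete slips that would derail the computation. For $\Aff(p,\alpha)$ one has $\varphi_{x_0}(y)=(1-\alpha)x_0+\alpha y$, which fixes $x_0$ and has order $\mathrm{ord}(\alpha)\mid p-1$; it is never a $p$-cycle, so the sentence ``$\varphi_{x_0}$ has order exactly $p$ acting as a $p$-cycle'' confuses $\varphi_{x_0}$ with a translation, and ``the freeness of the translation action on $X\setminus\{x_0\}$'' does not refer to any actual action in play. Moreover $\partial_2(x,y)=(y)-(x\trid y)$, not $(x)-(x\trid y)$, and $\ker\partial_2$ is a lattice of rank $p^2-p+1$ (since $\img\partial_2$ is the augmentation ideal, of rank $p-1$, by indecomposability); it is not described by listing which basis vectors $(x,y)$ are individually cycles, so ``the orbit structure of $(x,y)\mapsto(x,x\trid y)$'' does not give you the kernel. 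To make the direct route work you would need explicit generators of $\ker\partial_2$ modulo $\img\partial_3$ over $\Z_{(p)}$, or simply quote the two references as the paper does.
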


\begin{proof}
It follows from \cite[Lemma 5.1]{MR2093034} and \cite[Theorem 2.2]{MR1952425}.
\end{proof}


\begin{table}
\caption{Some homology groups}
\begin{tabular}{|c|c|}
\hline 
Indecomposable quandle $Q$ &  $H_{2}(Q,\Z)$\tabularnewline
\hline 
$Q_{4,1}$   & $\Z\times\Z_{2}$\tabularnewline
\hline 
$Q_{6,1}$   & $\Z\times\Z_{2}$\tabularnewline
\hline 
$Q_{6,2}$   & $\Z\times\Z_{4}$\tabularnewline
\hline 
$Q_{8,1}$, $Q_{8,2},Q_{8,3}$  & $\Z$\tabularnewline
\hline 
$Q_{9,1}$, $Q_{9,4}$, $Q_{9,5}$, $Q_{9,7}$, $Q_{9,8}$ & $\Z$\tabularnewline
\hline 
$Q_{9,2}$, $Q_{9,3}$, $Q_{9,6}$  & $\Z\times\Z_{3}$\tabularnewline
\hline 
$Q_{10,1}$   & $\Z\times\Z_{2}$\tabularnewline
\hline 
$Q_{12,1}$, $Q_{12,2}$, $Q_{12,4}$ &  $\Z\times\Z_2$\tabularnewline
\hline 
$Q_{12,3}$ &   $\Z\times\Z_{10}$\tabularnewline
\hline 
$Q_{12,5}$, $Q_{12,6}$ &  $\Z\times\Z_{4}$\tabularnewline
\hline 
$Q_{12,7}$   & $\Z\times\Z_{2}\times\Z_{4}$\tabularnewline
\hline 
$Q_{12,8}$   & $\Z\times\Z_{2}^{3}$\tabularnewline
\hline 
$Q_{12,9}$   & $\Z\times\Z_{4}^{2}$\tabularnewline
\hline 
$Q_{12,10}$   & $\Z\times\Z_{6}$\tabularnewline
\hline 
$Q_{15,1}$, $Q_{15,3}$, $Q_{15,4}$ &  $\Z$\tabularnewline
\hline 
$Q_{15,2}$   & $\Z\times\Z_{2}^{2}$\tabularnewline
\hline 
$Q_{15,5}$, $Q_{15,6}$  & $\Z\times\Z_{5}$\tabularnewline
\hline 
$Q_{15,7}$ &   $\Z\times\Z_{2}$\tabularnewline
\hline 
$Q_{16,1}$,$Q_{16,7}$   & $\Z\times\Z_{4}$\tabularnewline
\hline 
$Q_{16,2}$ &   $\Z\times\Z_{2}^{4}$\tabularnewline
\hline 
$Q_{16,3}$, $Q_{16,4}$ &   $\Z\times\Z_{2}^{2}$\tabularnewline
\hline 
$Q_{16,5}$, $Q_{16,6}$ &   $\Z\times\Z_{2}$\tabularnewline
\hline 
$Q_{16,8}$, $Q_{16,9}$ &   $\Z$\tabularnewline
\hline
$Q_{18,1}$, $Q_{18,8}$, $Q_{18,11}$, $Q_{18,12}$  & $\Z\times\Z_6$\tabularnewline
\hline
$Q_{18,2}$, $Q_{18,9}$, $Q_{18,10}$ &  $\Z\times\Z_2$\tabularnewline
\hline
$Q_{18,3}$, $Q_{18,6}$, $Q_{18,7}$ &  $\Z\times\Z_4$\tabularnewline
\hline
$Q_{18,4}$, $Q_{18,5}$ &  $\Z\times\Z_{12}$\tabularnewline
\hline
$Q_{20,1}$, $Q_{20,2}$, $Q_{20,3}$ &  $\Z\times\Z_{6}$\tabularnewline
\hline
$Q_{20,4}$, $Q_{20,7}$, $Q_{20,8}$ & $\Z\times\Z_{2}$\tabularnewline
\hline
$Q_{20,5}$, $Q_{20,9}$ &  $\Z\times\Z_{2}^2$\tabularnewline
\hline
$Q_{20,6}$ &  $\Z\times\Z_{2}\times\Z_4$\tabularnewline
\hline
$Q_{20,10}$ & $\Z\times\Z_4$\tabularnewline
\hline
$Q_{21,1}$, $Q_{21,2}$, $Q_{21,3}$, $Q_{21,4}$, $Q_{21,5}$ & $\Z$\tabularnewline
\hline
$Q_{21,6}$ & $\Z\times\Z_2^2$\tabularnewline
\hline
$Q_{21,7}$, $Q_{21,8}$ & $\Z\times\Z_7$\tabularnewline
\hline
$Q_{21,9}$ & $\Z\times\Z_2$\tabularnewline
\hline
\end{tabular}
\label{tab:H2}
\end{table}

\section{Racks of type D}

\medskip
Recall from \cite{AMPA} that a finite rack $X$ is of type D if there exists an
indecomposable subrack $Y=R\sqcup S$ (here $R$ and $S$ are the components of
$Y$) such that
\[
r\trid(s\trid(r\trid s))\ne s
\]
for some $r\in R$ and $s\in S$. 

\medskip
Quandles of type D are very important for the classification of finite-dimensional
pointed Hopf algebras, see for example the program described in \cite[\S
2.6]{CLA}. For some interesting applications we refer to \cite{AMPA,
MR2745542}.

\begin{pro}
	\label{pro:typeD}
	Let $Q$ be an indecomposable quandle of size $<36$. Then $Q$ is of type
	D if and only if $Q$ is isomorphic to one of the following quandles:
	\begin{enumerate}
		\item $Q_{12,1}$,
		\item $Q_{18,i}$ for $i\in\{1,2,3,4,5,6,7,8,9,10\}$,
		\item $Q_{20,3}$,
		\item $Q_{24,i}$ for $i\in\{1,2,3,4,5,6,8,10,11,16,17,21,22,23,26,27,28,32\}$,
		\item $Q_{27,i}$ for $i\in\{1,14\}$,
		\item $Q_{30,i}$ for $i\in\{1,2,3,4,5,6,11,12,13,14,15,16\}$,
		\item $Q_{32,i}$ for $i\in\{1,2,3,5,6,7,8,9\}$.
	\end{enumerate}
\end{pro}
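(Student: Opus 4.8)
The plan is to prove both implications by an exhaustive but local computation over the complete database of indecomposable quandles of size $<36$ produced by Theorem \ref{thm:main} and Algorithm \ref{alg:quandles} (whose cardinalities are recorded in Table \ref{tab:all}). Since there are only finitely many such quandles, it suffices to supply, for each of them, either a witness certifying that it is of type D or an argument that no witness exists; the real content is to make the search for a witness finite and cheap.

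First I would reduce the defining condition to a statement about $2$-generated subracks. Given $r,s\in Q$, let $\langle r,s\rangle$ denote the subrack they generate, i.e.\ the smallest subset of $Q$ containing $r,s$ and closed under $\trid$ (in a finite rack closure under $\trid$ suffices, since each $\varphi_i$ then restricts to a bijection). I claim that $\langle r,s\rangle=O_r\cup O_s$, where $O_r,O_s$ are the orbits of $\Inn(\langle r,s\rangle)$ through $r$ and $s$: indeed $O_r\cup O_s$ is $\Inn(\langle r,s\rangle)$-invariant and closed under $\trid$, hence a subrack containing $r$ and $s$, so it equals $\langle r,s\rangle$. Consequently $\langle r,s\rangle$ has at most two components, and when it is decomposable these components are exactly $O_r$ and $O_s$. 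This yields the criterion: $Q$ is of type D if and only if there exist $r,s\in Q$ such that $\langle r,s\rangle$ is decomposable with $r,s$ in distinct components and $r\trid(s\trid(r\trid s))\ne s$. The nontrivial direction is ``only if'': if $Y=R\sqcup S$ is a type D witness with $r\in R$, $s\in S$, then $\langle r,s\rangle\subseteq Y$ splits as $(\langle r,s\rangle\cap R)\sqcup(\langle r,s\rangle\cap S)$, because $R$ and $S$ are unions of components of $Y$ and hence are preserved by every $\varphi_w$ with $w\in Y$; thus $r$ and $s$ already lie in distinct components of $\langle r,s\rangle$, and the inequality is inherited verbatim.

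With this criterion the verification becomes an $O(n^2)$ search per quandle: for each pair $(r,s)$ one computes $\langle r,s\rangle$, its decomposition into $\Inn$-orbits, and evaluates $r\trid(s\trid(r\trid s))$. For each quandle in items (1)--(7) I would exhibit an explicit pair $(r,s)$ realizing the inequality with $r,s$ separated, which certifies type D; this is the ``if'' part and is carried out by the corresponding \rig~function. For every other indecomposable quandle of size $<36$ (including all quandles of prime order, which are affine by \cite{MR1848966}) one checks that the criterion fails for all pairs, which certifies that it is not of type D; this exhausts the ``only if'' direction.

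The main obstacle is the second, negative half of the computation rather than any single conceptual step: it requires running the search to completion over the large families of quandles of orders $24$, $27$, $30$ and $32$ (for instance $q(27)=65$), and its correctness rests on two inputs that must be trusted and cross-checked---the completeness of the underlying classification of transitive groups feeding Algorithm \ref{alg:quandles}, and the correct computation of subracks and of their orbit decompositions in \rig. As sanity checks one can confirm that the listed quandles are mutually non-isomorphic representatives consistent with Table \ref{tab:all}, and that the outcome agrees with the independently known behaviour of the distinguished quandles appearing earlier (e.g.\ those of Table \ref{tab:finite_enveloping_groups}).
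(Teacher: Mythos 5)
Your proposal matches the paper's proof in substance: the paper likewise establishes the proposition by an exhaustive computer verification over the classified indecomposable quandles of size $<36$, and your reduction to $2$-generated subracks is the standard (and correctly justified) criterion that underlies such a computation. The only difference is that the paper disposes of prime orders theoretically---indecomposable quandles of prime size are affine by \cite{MR1848966}, and affine quandles with $p$ elements are not of type D by \cite[Prop.~4.2]{CLA}---whereas you fold those cases into the computation, which is equally valid.
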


\begin{proof}
By \cite{MR1848966}, indecomposable quandles of size $p$ are affine.
Further, \cite[Prop. 4.2]{CLA} implies that  affine quandles with $p$ elements are
not of type D.  Therefore we may assume that the size of $Q$ is not a prime
number. Now the claim follows from a straightforward computer calculation.
\end{proof}

\begin{cor}
	Let $Q$ be an indecomposable simple quandle of size $<36$. Assume that 
	$Q$ is of type D. Then $Q\simeq Q_{30,3}$.\qed
\end{cor}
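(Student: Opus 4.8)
The plan is to combine the classification already established in Proposition~\ref{pro:typeD} with the independent knowledge of which indecomposable quandles of size $<36$ are simple. The corollary concerns the intersection of two finite, completely explicit lists: the list of type-D quandles from Proposition~\ref{pro:typeD}, and the list of simple indecomposable quandles of size $<36$. So the whole proof reduces to cross-referencing these two lists and checking that the only common entry is $Q_{30,3}$.

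First I would recall that simplicity is a strong restriction. A quandle is simple when it admits no nontrivial quotients, and there is a well-known structural theorem (Andruskiewitsch--Gra\~na, Joyce) describing finite simple quandles: they are built from a simple group or an affine quandle over $(\Z_p)^n$ together with certain extra data. Using \rig, or directly from the structure theorem, one determines exactly which of the representatives $Q_{n,i}$ with $n<36$ are simple. Concretely, the affine simple quandles have prime-power order, and the non-affine simple ones are tied to simple groups acting on a conjugacy class; for $n<36$ this yields only a short list of sizes (the prime-power cases from affine quandles, and the conjugacy-class cases coming from $\Alt_5$ and $\Sym_5$). I would tabulate precisely which indices $i$ give simple quandles for each relevant $n$.

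Next I would intersect this list of simple quandles with the seven families in Proposition~\ref{pro:typeD}. The key observation is that simplicity forbids $Q$ from having nontrivial components in a controlled way: affine quandles of prime order are excluded because by \cite{MR1848966} and \cite[Prop.~4.2]{CLA} they are not of type D, and affine quandles of prime-power order $p^k$ with $k>1$ turn out not to appear on the type-D list. Running through sizes $12,18,20,24,27,30,32$, one checks that the only one of these that also hosts a simple quandle is $n=30$, and that among $Q_{30,1},\dots,Q_{30,16}$ the unique simple representative on the type-D list is $Q_{30,3}$. This is exactly the kind of finite verification that \rig~performs directly.

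The main obstacle will be the bookkeeping rather than any conceptual difficulty: one must be confident that the simplicity status of each $Q_{n,i}$ has been correctly computed, since an error there could either omit a genuine simple type-D quandle or wrongly include a non-simple one. I would therefore lean on the database in \rig~to certify simplicity for each candidate, cross-checking the non-affine simple cases against the structure theorem for finite simple quandles. Once both lists are pinned down, the conclusion $Q\simeq Q_{30,3}$ is immediate, which is why the statement is offered with a \verb+\qed+ and no displayed argument---the content is entirely the computational classification already in place.
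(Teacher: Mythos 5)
Your proposal matches the paper's (implicit) argument: the corollary is stated with a \verb+\qed+ precisely because it is an immediate consequence of Proposition~\ref{pro:typeD} combined with a finite check of which quandles on that list are simple, which is exactly the cross-referencing you describe. The appeal to the structure theorem for finite simple quandles and to the \rig~database as a certificate is consistent with how the paper handles all such finite verifications.
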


\begin{acknowledgement*}
I am grateful to M. Gra\~na for writing several functions for the package. I
would like to thank N. Andruskiewitsch, E. Clark, F. Fantino, M. Farinati, J.
A.  Guccione, J. J Guccione, I. Heckenberger and A. Lochmann for several
conversations related to racks and quandles.  I also thank J. A. Hulpke for the
list of  transitive groups of degree 33, 34 and 35 and D. Holt for the list of
transitive groups of degree 32. 
\end{acknowledgement*}


\def\cprime{$'$}

\end{document}